\documentclass[12pt]{amsart}
\usepackage{url}

\usepackage[top=1.25in, bottom=1.25in, left=1.05in, right=1.05in]{geometry}

\usepackage{amsthm,amsmath,amssymb,graphicx}
\usepackage[colorlinks=true,citecolor=black,linkcolor=black,urlcolor=blue]{hyperref}

\theoremstyle{plain}
\newtheorem{definition}{Definition}[section]
\newtheorem{remark}[definition]{Remark}

\newtheorem{lemma}[definition]{Lemma}
\newtheorem{theorem}[definition]{Theorem}
\newtheorem{corollary}[definition]{Corollary}

\numberwithin{equation}{section}
\begin{document}

\title[Hankel determinants for the generalized derangement polynomials]{\large The Hankel determinants for the generalized derangement polynomials of order $r$}

\author[G. Guettai]{GHANIA GUETTAI}

\address[G. Guettai]{University Yahia Far\`{e}s M\'{e}d\'{e}a\\ Faculty of Science\\ urban pole, 26000\\ M\'{e}d\'{e}a, Algeria}
\email{guettai78@yahoo.fr}
\author[D. Laissaoui]{DIFFALAH LAISSAOUI}
\address[D. Laissaoui]{University Yahia Far\`{e}s M\'{e}d\'{e}a\\ Faculty of Science\\ urban pole, 26000\\ M\'{e}d\'{e}a, Algeria}
\email{laissaoui.diffalah74@gmail.com}

\author[M. Rahmani]{MOURAD RAHMANI}
\address[M. Rahmani]{Faculty of Mathematics\\ USTHB \\ P.O. Box 32 El Alia 16111\\ Algiers \\ Algeria.}
\email{rahmani.mourad@gmail.com, mrahmani@usthb.dz}

\vspace{20pt}

\begin{abstract}
This paper sets out to introduce the generalized derangement polynomials of order $r $. It then proceeds to establish various identities associated with these polynomials, along with providing recurrence relations for derangement polynomials of order $ r$. Additionally, the paper offers a probabilistic approach for the generalized derangement polynomials of order $r $. Furthermore, it furnishes a clear expression for the Hankel determinants pertaining to these generalized derangement polynomials of order $r$, and subsequently infers the Hankel determinants for derangement polynomials of the same order, as well as for the count of cyclic derangements.
\end{abstract}
\subjclass[2010]{11B83, 05A15, 60C05, 05A19}
\keywords{Derangement number, derangement polynomials of order $r$, $r$-derangement polynomials, probabilistic approach, Hankel determinants.}
\maketitle

\section{Introduction}
A derangement refers to a permutation where none of the elements occupy their original positions. The derangement number, denoted as $D_{n}$, represents the count of such permutations for a set of  $n$ elements (also known as the number of fixed point-free permutations). The task of enumerating derangements was first tackled by Pierre R\'{e}mond de Montmort (1678-1719) and was referred to as "le probl\`{e}me de rencontres" in French. The first few terms of the derangement number are $D_{0}=1$, $D_{1}=0$, $D_{2}=1$, $D_{3}=2$, $D_{4}=9,\ldots$. \\A simple explicit expression for $D_{n}$\ is (see \cite{Bon,Car})
\begin{equation}
	D_{n}=n!\sum_{k=0}^{n}\frac{\left(  -1\right)  ^{k}}{k!}.\label{1}
\end{equation}
From (\ref{1}), we note that the exponential generating function of derangement numbers is given by
\begin{equation}
	\frac{e^{-z}}{1-z}=\sum_{n\geq0}D_{n}\frac{z^{n}}{n!}.\label{2}
\end{equation}

A fixed point-free permutations on $n+r$ letters will be called fixed point-free $r$-permutation if in its cycle decomposition the first $r$ letters appear to be in distinct cycles. The number of fixed point-free $r$-permutations denote by $D_{n}^{(r)}$ and call $r$-derangement number $(r\in\mathbb{N}^{\ast})$. The first $r$ elements, as well as the cycles they are contained in will be called distinguished. For any $r\in\mathbb{N}^{\ast}$, the exponential generating function of the sequence of $r$-derangements numbers is given by (see \cite{Mez})
\begin{equation}
	\frac{z{^{r}}e^{-z}}{\left(1-z\right)^{r+1}}=\sum_{n\geq0}D_{n}^{(r)}\frac{z^{n}}{n!}.\label{3}
\end{equation}

The derangement numbers $d_{n}^{(r)}$ of order $r$ $(n\geq0)$, are defined by the exponential generating function (see \cite{Kim})
\begin{equation}\label{2-1}
	\frac{e^{-z}}{\left(  1-z\right)^{r}}=\sum_{n\geq0}d_{n}^{(r)}\frac{z^{n}}{n!}.
\end{equation}

Many generalizations and extensions have been published in recent years. Firstly, the $r$-derangement polynomials $D_{n}^{(r)}(x)$, are defined by the exponential generating function (see \cite{Mez})
\begin{equation}
	\frac{z{^{r}}e^{xz}}{\left(1-z\right)  ^{r+1}}=\sum_{n\geq0}D_{n}^{(r)}(x)\frac{z^{n}}{n!}.\label{4}
\end{equation}
Secondly, the derangement polynomials $d_{n}^{(r)}(x) $ of order $r$, are defined by the exponential generating function (see \cite{Kim})
\begin{equation}
	\frac{e^{xz}}{\left(  1-z\right)^{r}}=\sum_{n\geq0}d_{n}^{(r)}(x)\frac{z^{n}}{n!}.\label{5}
\end{equation}
Another generalization of the derangement number is the number of cyclic derangements $d_{n,r}$, are defined by the exponential generating function (see \cite{Assaf})
\begin{equation}
	\frac{e^{-z}}{1-rz}=\sum_{n\geq0}d_{n,r}\frac{z^{n}}{n!}.\label{6}
\end{equation}

\section{The generalized derangement polynomials of order $r$}
This section introduces a novel set of polynomials termed as the "generalized derangement polynomials of order  $r$". We will outline their characteristics and elucidate their connection with derangement polynomials of the same order. Following this, we will provide the recurrence relations governing these derangement polynomials of degree  $r$.
\begin{definition}
	\label{def1}We define the generalized derangement polynomials $ \mathbb{D}_{n}^{(r)}\left(  x\right) $ of order $r$ by
	\begin{equation}
		\frac{e^{z}}{\left(  1-xz\right)  ^{r}}=\sum_{n\geq0}\mathbb{D}_{n}
		^{(r)}\left(  x\right)  \frac{z^{n}}{n!}.\label{D1}
	\end{equation}
\end{definition}
\begin{theorem}
	For $n\geq0$, we have the explicit formula of the generalized derangement polynomials of order $r$
	\begin{equation}
		\mathbb{D}_{n}^{(r)}\left(  x\right)  =\sum_{k=0}^{n}\binom{n}{k}r^{\overline{k}}x^{k}.
	\end{equation}
	where $ r^{\overline{k}}$ is the rising factorial, defined by $r^{\overline{k}}=r(r+1)...(r+k-1) $, with $r^{\overline{0}}=1 $.	
\end{theorem}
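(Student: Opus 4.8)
The plan is to compute the coefficient of $z^{n}/n!$ in the defining generating function \eqref{D1} directly, by expanding each of its two factors as a power series in $z$ and forming their Cauchy product.

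First I would expand the exponential factor as $e^{z}=\sum_{i\geq 0} z^{i}/i!$. Next, I would expand the rational factor by the generalized binomial theorem: writing $t=xz$,
\[
(1-xz)^{-r}=\sum_{k\geq 0}\binom{-r}{k}(-xz)^{k}=\sum_{k\geq 0}\binom{r+k-1}{k}(xz)^{k}.
\]
The key identity at this stage is $\binom{r+k-1}{k}=\dfrac{r(r+1)\cdots(r+k-1)}{k!}=\dfrac{r^{\overline{k}}}{k!}$, which recasts the series as $\sum_{k\geq 0}\frac{r^{\overline{k}}}{k!}x^{k}z^{k}$.

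With both factors in hand, I would multiply the two series and collect the coefficient of $z^{n}$ via the Cauchy product rule, obtaining $\sum_{k=0}^{n}\frac{1}{(n-k)!}\cdot\frac{r^{\overline{k}}}{k!}x^{k}$. Since this coefficient equals $\mathbb{D}_{n}^{(r)}(x)/n!$ by \eqref{D1}, multiplying through by $n!$ turns $\frac{n!}{(n-k)!\,k!}$ into $\binom{n}{k}$ and yields the claimed formula $\mathbb{D}_{n}^{(r)}(x)=\sum_{k=0}^{n}\binom{n}{k}r^{\overline{k}}x^{k}$.

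There is no serious obstacle here; the only point requiring care is the correct identification of the generalized binomial coefficient $\binom{-r}{k}$ with $r^{\overline{k}}/k!$, including the sign cancellation $(-1)^{k}(-x)^{k}=x^{k}$, after which the result follows by a routine extraction of coefficients. An essentially equivalent alternative would be to check the formula at $n=0$ and then verify that both sides satisfy the same first-order recurrence obtained by differentiating \eqref{D1} in $z$, but the direct Cauchy-product computation is shorter and more transparent.
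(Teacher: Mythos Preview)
Your proposal is correct and follows essentially the same route as the paper: expand $e^{z}$ and $(1-xz)^{-r}$ as power series (the paper writes the latter directly as $\sum_{k\geq 0} r^{\overline{k}}(xz)^{k}/k!$ without the intermediate $\binom{-r}{k}$ step), multiply, and read off the coefficient of $z^{n}/n!$. The only cosmetic difference is that the paper performs an explicit index shift $n\mapsto n-k$ where you invoke the Cauchy product rule.
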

\begin{proof}
	By using the exponential generating function (\ref{D1}), we obtain
	\begin{align*}
		\sum_{n\geq0}\mathbb{D}_{n}^{(r)}\left(  x\right)  \frac{z^{n}}{n!}  &
		=\frac{e^{z}}{\left(  1-xz\right)  ^{r}}\\
		&  =\sum_{k\geq0}\frac{r^{\overline{k}}(xz)^{k}}{k!}
		\sum_{n\geq0}\frac{z^{n}}{n!}\\
		&  =\sum_{k\geq0}r^{\overline{k}}x^{k}\sum_{n\geq0}
		\frac{z^{k}z^{n}}{k!n!}\\
		&  =\sum_{k\geq0}r^{\overline{k}}x^{k}\sum_{n\geq0}
		\binom{n+k}{k}\frac{z^{n+k}}{(n+k)!}\\
		&  =\sum_{k\geq0}r^{\overline{k}}x^{k}\sum_{n\geq k}\binom
		{n}{k}\frac{z^{n}}{n!}\\
		&  =\sum_{n\geq0}\left(  \sum_{k=0}^{n}\binom{n}{k}r^{\overline{k}}x^{k}\right)  \frac{z^{n}}{n!}.
	\end{align*}
	Equating the coefficients of $\frac{z^{n}}{n!}$ in both sides of the above expression, we get the desired result.
\end{proof}
The recurrence relation of the generalized derangement polynomials $\mathbb{D}_{n}^{(r)}\left(x\right) $ of order $r$ is given by
\begin{theorem}
	For $n\geq0$, we have
	\begin{equation}\label{T3}
		\mathbb{D}_{n+1}^{(r)}\left(  x\right)  =\mathbb{D}_{n}^{(r)}\left(  x\right)
		+rx\mathbb{D}_{n}^{(r+1)}\left(  x\right).
	\end{equation}	
\end{theorem}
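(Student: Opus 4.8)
The plan is to exploit the generating function (\ref{D1}) directly, since the simultaneous appearance of order $r$ and order $r+1$ on the right-hand side of (\ref{T3}) is exactly what one obtains by differentiating $(1-xz)^{-r}$ with respect to $z$. Writing $F_r(z)=e^{z}/(1-xz)^{r}$, I would first record the elementary fact that termwise differentiation shifts the coefficient index by one:
\[
\frac{\partial}{\partial z}F_r(z)=\sum_{n\geq0}\mathbb{D}_{n+1}^{(r)}(x)\,\frac{z^{n}}{n!}.
\]

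Next I would compute the same derivative in closed form by the product rule. Differentiating the factor $e^{z}$ reproduces $F_r(z)$, while differentiating $(1-xz)^{-r}$ produces $rx(1-xz)^{-(r+1)}$; hence
\[
\frac{\partial}{\partial z}F_r(z)=\frac{e^{z}}{(1-xz)^{r}}+rx\,\frac{e^{z}}{(1-xz)^{r+1}}.
\]
The crucial observation is that the second summand is precisely $rx$ times the generating function $F_{r+1}(z)$ defining $\mathbb{D}_{n}^{(r+1)}(x)$ in (\ref{D1}), which is where the shift in the order parameter enters. Combining the two expressions gives
\[
\sum_{n\geq0}\mathbb{D}_{n+1}^{(r)}(x)\,\frac{z^{n}}{n!}=\sum_{n\geq0}\mathbb{D}_{n}^{(r)}(x)\,\frac{z^{n}}{n!}+rx\sum_{n\geq0}\mathbb{D}_{n}^{(r+1)}(x)\,\frac{z^{n}}{n!},
\]
and equating the coefficients of $z^{n}/n!$ on both sides yields (\ref{T3}).

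There is essentially no serious obstacle here; the only point requiring a little care is recognizing that differentiating the order-$r$ factor lands on the order-$(r+1)$ generating function, so that the right-hand side genuinely assembles from two different polynomial families. As a purely algebraic alternative that avoids calculus, one could instead start from the explicit formula of the previous Theorem, expand $\mathbb{D}_{n+1}^{(r)}(x)=\sum_{k}\binom{n+1}{k}r^{\overline{k}}x^{k}$ via Pascal's rule $\binom{n+1}{k}=\binom{n}{k}+\binom{n}{k-1}$, and use the rising-factorial identity $r^{\overline{k+1}}=r\,(r+1)^{\overline{k}}$ after reindexing the shifted sum; this separates the expression into the $\mathbb{D}_{n}^{(r)}(x)$ term and the $rx\,\mathbb{D}_{n}^{(r+1)}(x)$ term, again giving (\ref{T3}).
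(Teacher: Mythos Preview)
Your proof is correct and follows essentially the same approach as the paper: differentiate the exponential generating function (\ref{D1}) with respect to $z$, compute the derivative both termwise and via the product rule, and equate coefficients of $z^{n}/n!$. Your additional remark sketching the purely algebraic alternative via Pascal's rule and the identity $r^{\overline{k+1}}=r\,(r+1)^{\overline{k}}$ is a nice bonus not present in the paper.
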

\begin{proof}
	By derivation the exponential generating function (\ref{D1}) with respect to $z$, we obtain
	\begin{align*}
		\frac{d}{dz}\left(  \sum_{n\geq0}\mathbb{D}_{n}^{(r)}\left(  x\right)
		\frac{z^{n}}{n!}\right)   &  =\frac{d}{dz}\left(  \frac{e^{z}}{\left(
			1-xz\right)  ^{r}}\right) \\
		&  =\frac{e^{z}}{\left(  1-xz\right)  ^{r}}+\frac{rxe^{z}}{\left(
			1-xz\right)  ^{r+1}}\\
		&  =\sum_{n\geq0}\mathbb{D}_{n}^{(r)}\left(  x\right)  \frac{z^{n}}{n!}%
		+rx\sum_{n\geq0}\mathbb{D}_{n}^{(r+1)}\left(  x\right)  \frac{z^{n}}{n!}\\
		&  =\sum_{n\geq0}\left( \mathbb{D}_{n}^{(r)}\left(  x\right)
		+rx\mathbb{D}_{n}^{(r+1)}\left(  x\right)\right)   \frac{z^{n}}{n!}.
	\end{align*}
	On the other hand, we have
	\[
	\frac{d}{dz}\left(  \sum_{n\geq0}\mathbb{D}_{n}^{(r)}\left(  x\right)
	\frac{z^{n}}{n!}\right)  =\sum_{n\geq0}\mathbb{D}_{n+1}^{(r)}\left(  x\right)
	\frac{z^{n}}{n!}.
	\]
	Equating the coefficients of $\frac{z^{n}}{n!}$ in both sides of the above expression, we get the desired result.
\end{proof}
\begin{theorem}
	For $n\geq0$, we have
	\begin{equation}\label{T4}
		\mathbb{D}_{n+1}^{(r)}\left(  x\right)  =\mathbb{D}_{n}^{(r)}\left(  x\right)
		+rx\sum_{k=0}^{n}\binom{n}{k}\mathbb{D}_{k}^{(r)}\left(  x\right)
		x^{n-k}\left(  n-k\right)!.
	\end{equation}
\end{theorem}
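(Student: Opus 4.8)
The plan is to reduce this identity to the previously established recurrence \eqref{T3}, namely $\mathbb{D}_{n+1}^{(r)}(x)=\mathbb{D}_{n}^{(r)}(x)+rx\,\mathbb{D}_{n}^{(r+1)}(x)$, and then to eliminate the raised order $r+1$ by re-expressing $\mathbb{D}_{n}^{(r+1)}(x)$ in terms of the order-$r$ polynomials. In effect, the whole content of the statement is this single reduction: the sum appearing on the right-hand side of \eqref{T4} should be exactly $\mathbb{D}_{n}^{(r+1)}(x)$, so that substituting it into \eqref{T3} reproduces \eqref{T4}.

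To obtain that reduction, I would first factor the generating function of the higher-order polynomials as
\[
\frac{e^{z}}{(1-xz)^{r+1}}=\frac{e^{z}}{(1-xz)^{r}}\cdot\frac{1}{1-xz}.
\]
By Definition \ref{def1}, the first factor is the exponential generating function whose coefficients are $\mathbb{D}_{k}^{(r)}(x)$. For the second factor, expanding the geometric series gives $\frac{1}{1-xz}=\sum_{m\geq0}x^{m}z^{m}=\sum_{m\geq0}m!\,x^{m}\frac{z^{m}}{m!}$, so as an exponential generating function it has coefficients $m!\,x^{m}$.

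Next I would invoke the binomial (Cauchy) convolution for products of exponential generating functions: multiplying $\sum a_{k}\frac{z^{k}}{k!}$ by $\sum b_{m}\frac{z^{m}}{m!}$ produces the coefficient $\sum_{k=0}^{n}\binom{n}{k}a_{k}b_{n-k}$ in front of $\frac{z^{n}}{n!}$. Taking $a_{k}=\mathbb{D}_{k}^{(r)}(x)$ and $b_{n-k}=(n-k)!\,x^{n-k}$ yields
\[
\mathbb{D}_{n}^{(r+1)}(x)=\sum_{k=0}^{n}\binom{n}{k}\mathbb{D}_{k}^{(r)}(x)\,x^{n-k}(n-k)!,
\]
and substituting this into \eqref{T3} gives precisely \eqref{T4}.

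I do not expect a genuine obstacle here, since each step is a routine generating-function manipulation already used earlier in the paper. The only point demanding care is the factorial bookkeeping: one must read off the \emph{exponential}-generating-function coefficient $m!\,x^{m}$ of $\tfrac{1}{1-xz}$, rather than its ordinary coefficient $x^{m}$, because overlooking the $m!$ there is exactly what would cause the $(n-k)!$ factor in the final sum to be lost or misplaced. A small check at $n=1$ and $n=2$ against the explicit formula would confirm that the factorials are placed correctly.
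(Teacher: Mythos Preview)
Your proposal is correct and is essentially the same argument as the paper's: both rely on the factorization $\dfrac{e^{z}}{(1-xz)^{r+1}}=\dfrac{e^{z}}{(1-xz)^{r}}\cdot\dfrac{1}{1-xz}$ together with the binomial convolution of exponential generating functions. The only cosmetic difference is that the paper recomputes the derivative of the generating function from scratch, whereas you invoke \eqref{T3} to reach the same point and then perform the identical Cauchy-product step.
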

\begin{proof}
	Using the derivative of the generating function (\ref{D1}) with respect to $z$, we obtain
	\begin{align*}
		\frac{d}{dz}\left(  \sum_{n\geq0}\mathbb{D}_{n}^{(r)}\left(  x\right)
		\frac{z^{n}}{n!}\right)   &  =\frac{d}{dz}\left(  \frac{e^{z}}{\left(
			1-xz\right)  ^{r}}\right) \\
		&  =\frac{e^{z}}{\left(  1-xz\right)  ^{r}}+\frac{rxe^{z}}{\left(
			1-xz\right)  ^{r+1}}\\
		&  =\sum_{n\geq0}\mathbb{D}_{n}^{(r)}\left(  x\right)  \frac{z^{n}}{n!}
		+rx\sum_{n\geq0}\mathbb{D}_{n}^{(r)}\left(  x\right)  \frac{z^{n}}{n!}
		\sum_{n\geq0}n!x^{n}\frac{z^{n}}{n!}.
	\end{align*}
	Thus,
	\[
	\sum_{n\geq0}\mathbb{D}_{n+1}^{(r)}\left(  x\right)  \frac{z^{n}}{n!}
	=\sum_{n\geq0}\left(\mathbb{D}_{n}^{(r)}\left(  x\right)+rx  \sum_{k=0}^{n}\binom{n}{k}\mathbb{D}_{k}^{(r)}\left(
	x\right)  x^{n-k}\left(  n-k\right)  !\right)  \frac{z^{n}}{n!}.
	\]
	Equating the coefficients of $\frac{z^{n}}{n!}$ in both sides of the above expression, we get the desired result.	
\end{proof}
Here we present some properties of the derangement polynomials $d_{n}^{(r)}(x) $ of order $r$ given by the formula (\ref{5}). We start with the explicit form and then provide two recurrence relations.
\begin{theorem}
	For $n\geq0$, we have the explicit formula of the derangement polynomials $d_{n}^{(r)}(x) $ of order $r$
	\begin{equation}
		{d}_{n}^{(r)}\left(  x\right)  =\sum_{k=0}^{n}\binom{n}{k}r^{\overline{k}}x^{n-k}.
	\end{equation}
	where $ r^{\overline{k}}$ is the rising factorial.	
\end{theorem}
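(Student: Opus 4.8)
The plan is to mirror exactly the argument used for the explicit formula of $\mathbb{D}_{n}^{(r)}(x)$, with the roles of the two factors interchanged. Starting from the defining generating function (\ref{5}), I would expand each factor of $\dfrac{e^{xz}}{(1-z)^{r}}$ as a power series in $z$ and then read off the coefficient of $z^{n}/n!$.

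First I would expand $(1-z)^{-r}$ using the generalized binomial series. Since $\binom{r+k-1}{k}=r^{\overline{k}}/k!$, this gives
\[
\frac{1}{(1-z)^{r}}=\sum_{k\geq0}r^{\overline{k}}\frac{z^{k}}{k!}.
\]
This is the only place the rising factorial enters, and it is now attached to the factor \emph{without} $x$, in contrast to the case of $\mathbb{D}_{n}^{(r)}(x)$ where $r^{\overline{k}}$ multiplied a power of $x$. Simultaneously I would write $e^{xz}=\sum_{m\geq0}x^{m}z^{m}/m!$.

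Then I would form the Cauchy product of these two series. Collecting the coefficient of $z^{n}$ yields
\[
\sum_{k=0}^{n}\frac{r^{\overline{k}}}{k!}\cdot\frac{x^{n-k}}{(n-k)!},
\]
and multiplying through by $n!$ turns each factor $\frac{1}{k!\,(n-k)!}$ into $\binom{n}{k}$. Equating the resulting coefficient of $z^{n}/n!$ with $d_{n}^{(r)}(x)$ then produces the claimed identity.

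I do not expect any genuine obstacle here: the proof is a routine binomial-series-plus-Cauchy-product computation, structurally identical to the earlier explicit-formula theorem. The only point that requires a little care is the bookkeeping, namely making sure it is the exponent $n-k$ (and not $k$) that lands on $x$; this is the single place where the present formula differs from that for $\mathbb{D}_{n}^{(r)}(x)$.
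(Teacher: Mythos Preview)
Your proposal is correct and follows essentially the same approach as the paper: expand $(1-z)^{-r}$ and $e^{xz}$ separately, form their Cauchy product, and identify the coefficient of $z^{n}/n!$. The paper writes out the index shift a little more explicitly (first summing over $n$ and $k$ with $z^{n+k}$, then replacing $n+k$ by $n$), but the content is identical to your computation.
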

\begin{proof}
	By using the exponential generating function (\ref{5}), we obtain
	\begin{align*}
		\sum_{n\geq0}{d}_{n}^{(r)}\left(  x\right)  \frac{z^{n}}{n!}  &
		=\frac{e^{xz}}{\left(  1-z\right)  ^{r}}\\
		&  =\sum_{k\geq0}\frac{r^{\overline{k}}z^{k}}{k!}
		\sum_{n\geq0}\frac{(xz)^{n}}{n!}\\
		&  =\sum_{k\geq0}r^{\overline{k}}\sum_{n\geq0}
		\frac{x^{n}z^{n+k}}{k!n!}\\
		&  =\sum_{k\geq0}r^{\overline{k}}\sum_{n\geq0}
		\binom{n+k}{k}x^{n}\frac{z^{n+k}}{(n+k)!}\\
		&  =\sum_{k\geq0}r^{\overline{k}}\sum_{n\geq k}\binom
		{n}{k}x^{n-k}\frac{z^{n}}{n!}\\
		&  =\sum_{n\geq0}\left(  \sum_{k=0}^{n}\binom{n}{k}r^{\overline{k}}x^{n-k}\right)  \frac{z^{n}}{n!}.
	\end{align*}
	Equating the coefficients of $\frac{z^{n}}{n!}$ in both sides of the above expression, we get the desired result.
\end{proof}
Now, it is easy to show the relationship between the polynomials $\mathbb{D}_{n}^{(r)}\left(  x\right)  $\ and the derangement polynomials $d_{n}^{\left(  r\right)  }\left(  x\right)$ of order $r$. For $n\geq0$ and $ x \neq 0$, we have
\begin{equation}\label{dn}
	x^{n}\mathbb{D}_{n}^{(r)}\left(  \frac{1}{x}\right)  =d_{n}^{\left(  r\right)}\left(  x\right)  .
\end{equation}
Then, we obtain the following corollaries
\begin{corollary}
	For $n\geq0$ and $ x \neq 0$, we have
	\begin{equation}\label{C5}
		d_{n+1}^{(r)}\left(  x\right)  =xd_{n}^{(r)}\left(  x\right)  +rd_{n}^{(r+1)}\left(  x\right)  \text{.}
	\end{equation}
\end{corollary}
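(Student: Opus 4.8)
The plan is to derive this corollary purely formally from the recurrence (\ref{T3}) for $\mathbb{D}_{n}^{(r)}$ together with the reciprocal relation (\ref{dn}) linking the two families of polynomials, so that no fresh generating-function manipulation is needed. Since (\ref{dn}) expresses $d_{n}^{(r)}(x)$ as $x^{n}\mathbb{D}_{n}^{(r)}(1/x)$, the strategy is to write (\ref{T3}) at the argument $1/x$ and then clear denominators by the correct power of $x$ so that the $d$-polynomials reappear on both sides.

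Concretely, first I would replace $x$ by $1/x$ throughout (\ref{T3}), which is legitimate for $x\neq 0$, obtaining
\[
\mathbb{D}_{n+1}^{(r)}\!\left(1/x\right)=\mathbb{D}_{n}^{(r)}\!\left(1/x\right)+\frac{r}{x}\,\mathbb{D}_{n}^{(r+1)}\!\left(1/x\right).
\]
Next I would multiply both sides by $x^{n+1}$. The left-hand side becomes $x^{n+1}\mathbb{D}_{n+1}^{(r)}(1/x)=d_{n+1}^{(r)}(x)$ by (\ref{dn}) with $n$ replaced by $n+1$. On the right-hand side, the first term gives $x\cdot x^{n}\mathbb{D}_{n}^{(r)}(1/x)=x\,d_{n}^{(r)}(x)$, while in the second term the factor $x^{n+1}/x=x^{n}$ combines with $\mathbb{D}_{n}^{(r+1)}(1/x)$ to yield $r\,x^{n}\mathbb{D}_{n}^{(r+1)}(1/x)=r\,d_{n}^{(r+1)}(x)$, again by (\ref{dn}) but at order $r+1$. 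Collecting the three pieces produces exactly (\ref{C5}).

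There is no real obstacle here beyond careful bookkeeping of the powers of $x$: the only subtlety is to distribute $x^{n+1}$ as $x\cdot x^{n}$ on the first right-hand term and as $x^{n}$ (after cancelling the $1/x$) on the second, and to remember that (\ref{dn}) is applied at index $n+1$ on the left but at index $n$ on the right. The substitution $x\mapsto 1/x$ also forces the hypothesis $x\neq 0$, which is precisely why the corollary carries that restriction. As an independent check that moreover removes the $x\neq 0$ caveat, I would differentiate the generating function (\ref{5}) in $z$: since $\frac{d}{dz}\bigl(e^{xz}(1-z)^{-r}\bigr)=x\,e^{xz}(1-z)^{-r}+r\,e^{xz}(1-z)^{-(r+1)}$, comparing the coefficients of $z^{n}/n!$ recovers (\ref{C5}) directly, valid for all $x$.
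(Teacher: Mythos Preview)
Your proof is correct and follows exactly the same approach as the paper: substitute $x\mapsto 1/x$ in (\ref{T3}), multiply through by $x^{n+1}$, and apply (\ref{dn}) to recover the $d$-polynomials. Your write-up is in fact more detailed than the paper's (which merely outlines these three steps), and your added generating-function check is a nice bonus not present in the original.
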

\begin{proof}
	Using the formula (\ref{T3}) and replacing $x$ by  $\frac{1}{x}$, after that multiplying the both sides of the new expression by $x^{n+1}$. Finally, by using the formula (\ref{dn}), we obtain the desired formula (\ref{C5}).
\end{proof}	
\begin{corollary}
	For $n\geq0$ and $ x \neq 0$, we have
	\begin{equation}
		d_{n+1}^{(r)}\left(  x\right)  =xd_{n}^{(r)}\left(  x\right)  +r\sum_{k=0}
		^{n}\binom{n}{k}d_{k}^{(r)}\left(  x\right)  \left(  n-k\right)!.
	\end{equation}
\end{corollary}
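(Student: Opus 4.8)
The plan is to mirror the proof of the preceding corollary (equation \eqref{C5}), replacing the two-term recurrence \eqref{T3} by its convolutional counterpart \eqref{T4}. Concretely, I would start from \eqref{T4}, substitute $x\mapsto 1/x$ throughout, then multiply both sides by $x^{n+1}$, so that the reciprocal relation \eqref{dn} can be applied termwise to convert every $\mathbb{D}$ into the corresponding $d$.

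First, after the substitution $x\mapsto 1/x$, equation \eqref{T4} reads
\[
\mathbb{D}_{n+1}^{(r)}\!\left(\tfrac1x\right)=\mathbb{D}_{n}^{(r)}\!\left(\tfrac1x\right)+\frac{r}{x}\sum_{k=0}^{n}\binom{n}{k}\mathbb{D}_{k}^{(r)}\!\left(\tfrac1x\right)\left(\tfrac1x\right)^{n-k}(n-k)!.
\]
Multiplying by $x^{n+1}$, the left-hand side becomes $x^{n+1}\mathbb{D}_{n+1}^{(r)}(1/x)=d_{n+1}^{(r)}(x)$ by \eqref{dn}, while the first term on the right becomes $x\cdot x^{n}\mathbb{D}_{n}^{(r)}(1/x)=x\,d_{n}^{(r)}(x)$.

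The sum is the only place where care is needed. The prefactor $x^{n+1}\cdot(1/x)=x^{n}$ combines with the interior factor $\left(1/x\right)^{n-k}=x^{-(n-k)}$ to leave exactly $x^{n}\cdot x^{-(n-k)}=x^{k}$, and then $x^{k}\mathbb{D}_{k}^{(r)}(1/x)=d_{k}^{(r)}(x)$ by \eqref{dn} once more. Collecting the three pieces reproduces the claimed identity. Thus the main ``obstacle'' is purely the exponent bookkeeping: verifying that the residual powers of $x$ in each summand conspire to leave precisely $x^{k}$, which is what makes \eqref{dn} applicable inside the sum; everything else is routine.

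Alternatively, and perhaps more transparently, one can argue directly from the generating function \eqref{5}. Differentiating $e^{xz}(1-z)^{-r}$ in $z$ yields $x\,e^{xz}(1-z)^{-r}+r\,e^{xz}(1-z)^{-r-1}$; writing the extra factor $(1-z)^{-1}=\sum_{n\ge0}n!\,z^{n}/n!$ and reading off the coefficient of $z^{n}/n!$ via the binomial (Cauchy) product gives the result, and in fact does so without needing $x\neq0$. I would present the reciprocal-relation argument as the primary proof, to keep the exposition parallel with \eqref{C5}, and mention the direct generating-function route as an alternative.
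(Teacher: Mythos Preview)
Your proposal is correct and matches the paper's intended argument: the paper states this corollary without writing out a proof, the implicit justification being precisely the parallel with the preceding corollary --- apply the substitution $x\mapsto 1/x$ to \eqref{T4}, multiply by $x^{n+1}$, and invoke \eqref{dn} termwise, exactly as you do. Your exponent bookkeeping in the sum is accurate, and your alternative generating-function remark (differentiating \eqref{5} and expanding $(1-z)^{-1}$) is a valid second route that indeed dispenses with the hypothesis $x\neq 0$.
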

\section{Probabilistic approach}
In this section, we will present a probabilistic representation of the generalized derangement polynomials of order $ r$. Let $f_{X}(x)$ be the probability density function of the continuous random variable $X$, and let $h(x)$ be a real valued function. Then the expectation
of $h(X)$, is defined by (see \cite{Ross})
\[
E(h(X))={\displaystyle\int\limits_{-\infty}^{+\infty}}
h(x)f_{X}(x)dx.
\]
A continuous random variable $X$, is said to be the gamma random variable with parameters $\alpha,\beta$ ; for some $\alpha>0$ and $\beta>0$, if its density function is given by
\[
f_{X}(x)=\genfrac{\{}{.}{0pt}{}{\dfrac{\beta^{\alpha}x^{\alpha-1}e^{-\beta x}}{\Gamma(\alpha)}\text{ \ \ if }x\geq0,}{0\text{ \ }else}
\]
where $\Gamma(\alpha)$ is the gamma function, is denoted $X\sim\Gamma(\alpha,\beta)$.
\begin{lemma}
	Let $Y\sim\Gamma(1,1).$ Then, for all $x<1$, we have
	\[
	e^{z}\phi_{Y}(z)=\sum_{n\geq0}D_{n}\frac{z^{n}}{n!},
	\]
	where $\phi_{Y}$ is the moment generating function of random variable $Y$.
\end{lemma}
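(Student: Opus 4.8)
The plan is first to identify the distribution concretely. Setting $\alpha=\beta=1$ in the gamma density collapses it to $f_Y(y)=e^{-y}$ for $y\ge 0$, so $Y\sim\Gamma(1,1)$ is nothing but the standard exponential variable. From the definition of the expectation recalled just above, the moment generating function is the elementary integral $\phi_Y(z)=E(e^{zY})=\int_0^{\infty}e^{zy}e^{-y}\,dy$, which converges precisely when $z<1$; this integrability constraint is what pins down the admissible range and shows that the condition written as $x<1$ in the statement is to be read as $z<1$. Evaluating the integral yields the closed form $\phi_Y(z)=\dfrac{1}{1-z}$.

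Next I would multiply this moment generating function by the prefactor $e^{z}$ exactly as it appears in the statement, expand the product as a single power series in $z$, and compare it with the derangement exponential generating function (\ref{2}). Since (\ref{2}) is itself a consequence of the explicit formula (\ref{1}), this reduces the lemma to a comparison of two closed-form generating functions: once both sides are written in the shape $\sum_{n\ge0}(\cdots)\dfrac{z^{n}}{n!}$, equating the coefficients of $\dfrac{z^{n}}{n!}$ should return the derangement numbers $D_{n}$.

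The delicate step, which I would treat with care rather than as routine, is the reconciliation of the exponential factors. The moment generating function contributes only the rational factor $\dfrac{1}{1-z}$, whereas the numerator of (\ref{2}) is $e^{-z}$; the whole substance of the lemma thus lies in checking that the prefactor $e^{z}$ combines with $\phi_Y(z)$ to reproduce exactly $\dfrac{e^{-z}}{1-z}$, keeping the sign of the exponent consistent throughout and respecting the range $z<1$. I would therefore focus the argument on this exponent-matching point, since the evaluation of the gamma integral and the subsequent coefficient extraction are entirely standard.
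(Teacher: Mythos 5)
Your evaluation of the moment generating function is correct: for $Y\sim\Gamma(1,1)$ one has $\phi_Y(z)=\int_0^{\infty}e^{zy}e^{-y}\,dy=\frac{1}{1-z}$ for $z<1$, and you are right that the hypothesis ``$x<1$'' must be read as $z<1$. But the step you defer as ``delicate'' --- checking that the prefactor $e^{z}$ combines with $\phi_Y(z)$ to reproduce $\frac{e^{-z}}{1-z}$ --- is not delicate; it is impossible, and your proposal stalls exactly there. Indeed $e^{z}\phi_Y(z)=\frac{e^{z}}{1-z}$, which by the paper's own explicit formula equals $\sum_{n\geq0}\mathbb{D}_{n}^{(1)}(1)\frac{z^{n}}{n!}$ with $\mathbb{D}_{n}^{(1)}(1)=\sum_{k=0}^{n}\binom{n}{k}k!$; these are the arrangement numbers $1,2,5,16,65,\ldots$, not the derangement numbers $1,0,1,2,9,\ldots$. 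Already at $n=1$ the coefficient is $2$ while $D_{1}=0$, and no convention for $\phi_Y$ (Laplace transform, characteristic function) repairs the discrepancy. The lemma as printed carries a sign error: the correct statement, which your computation proves in one line, is
\[
e^{-z}\phi_{Y}(z)=\frac{e^{-z}}{1-z}=\sum_{n\geq0}D_{n}\frac{z^{n}}{n!},
\]
by (\ref{2}); equivalently one can evaluate at $x=-1$, $r=1$ in the subsequent corollary and use $(-1)^{n}\mathbb{D}_{n}^{(1)}(-1)=D_{n}$, which is (\ref{d_{n,r}}) with $r=1$.

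For what it is worth, the paper states this lemma with no proof at all, so there is no argument of the authors to compare yours against. The genuine gap in your proposal is that it leaves the decisive verification as a promissory note: had you actually multiplied $e^{z}\cdot\frac{1}{1-z}$ and extracted even one nontrivial coefficient, you would have found that the identity as stated fails and concluded that the prefactor must be $e^{-z}$. A blind proof attempt should end either with the verification completed or with the statement flagged as false as written; yours does neither.
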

\begin{corollary}
	Let $X_{1},X_{2},\ldots,X_{r}$ be independent and identically distributed gamma random variables with shape $\alpha=1$ and rate $\beta=1$, we assume
	$Y_{r}={\displaystyle\sum\limits_{l=1}^{r}}	X_{l}$, then
	\[
	e^{z}\phi_{Y_{r}}(xz)=\sum_{n\geq0}\mathbb{D}_{n}^{(r)}\left(  x\right)  \frac{z^{n}}{n!},
	\]
	where $\phi_{Y_{r}}$ is the moment generating function of random variable
	$Y_{r}.$
\end{corollary}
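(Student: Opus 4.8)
The plan is to reduce the claimed identity to the defining generating function \eqref{D1} by computing the moment generating function $\phi_{Y_r}$ explicitly and showing that $\phi_{Y_r}(xz)=(1-xz)^{-r}$. Indeed, the right-hand side of the asserted identity is, by Definition \ref{def1}, nothing but $e^z/(1-xz)^r$, so it suffices to verify that $e^z\phi_{Y_r}(xz)=e^z/(1-xz)^r$, i.e. that $\phi_{Y_r}(t)=(1-t)^{-r}$ after the substitution $t=xz$.

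First I would record the moment generating function of a single summand. For $X_l\sim\Gamma(1,1)$ the density is $f_{X_l}(x)=e^{-x}$ on $x\geq0$, so a direct integration gives
\[
\phi_{X_l}(t)=E(e^{tX_l})=\int_0^{\infty}e^{tx}e^{-x}\,dx=\frac{1}{1-t},\qquad t<1.
\]
This is the single-variable moment generating function underlying the preceding Lemma.

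Next I would use independence. Because $X_1,\dots,X_r$ are independent, the moment generating function of the sum $Y_r=\sum_{l=1}^r X_l$ factors as the product of the individual moment generating functions, so that $\phi_{Y_r}(t)=\prod_{l=1}^r\phi_{X_l}(t)=(1-t)^{-r}$ for $t<1$. Equivalently, one may note that a sum of $r$ independent $\Gamma(1,1)$ variables is a $\Gamma(r,1)$ variable and quote the gamma moment generating function directly. Substituting $t=xz$ then yields $\phi_{Y_r}(xz)=(1-xz)^{-r}$, and multiplying by $e^z$ and invoking \eqref{D1} gives
\[
e^z\phi_{Y_r}(xz)=\frac{e^z}{(1-xz)^r}=\sum_{n\geq0}\mathbb{D}_n^{(r)}(x)\frac{z^n}{n!},
\]
which is the assertion.

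The argument is essentially mechanical, so I expect no serious obstacle. The only points requiring care are the domain of validity, since the moment generating function exists only for $xz<1$ (equivalently one reads the identity as an equality of formal power series in $z$), and the fact that the factorization step genuinely uses the independence hypothesis rather than merely identical distribution. I would flag this convergence and formal-series bookkeeping as the main, though modest, delicacy of the proof.
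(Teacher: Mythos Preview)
Your proof is correct and matches the paper's intended approach: the statement appears in the paper as a corollary without proof, so the expected argument is precisely what you supply---compute the moment generating function of a single $\Gamma(1,1)$ variable (this is the content of the preceding Lemma), use independence to factor $\phi_{Y_r}(t)=(1-t)^{-r}$, and compare with the defining generating function \eqref{D1}.
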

\begin{corollary}
	Let $X_{1},X_{2},\ldots,X_{r}$ be independent and identically distributed
	random variables with $X\sim\Gamma(1,1)$, we assume $Y_{r}=
	{\displaystyle\sum\limits_{l=1}^{r}}
	X_{l}$, then
	\begin{align*}
		\mathbb{D}_{n}^{(r)}\left(  x\right)   &  =\sum_{k=0}^{n}\binom{n}{k}
		x^{k}E(Y_{r}^{k})
	\end{align*}
	and
	\[
	d_{n}^{\left(  r\right)  }\left(  x\right)  =\sum_{k=0}^{n}\binom{n}{k}
	x^{n-k}E(Y_{r}^{k}).
	\]
\end{corollary}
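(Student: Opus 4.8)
The plan is to exploit the moment generating function identity furnished by the preceding corollary, namely $e^{z}\phi_{Y_{r}}(xz)=\sum_{n\geq0}\mathbb{D}_{n}^{(r)}(x)\frac{z^{n}}{n!}$, and to expand its left-hand side as a product of two exponential generating functions. Since each $X_{l}\sim\Gamma(1,1)$ has moment generating function $\frac{1}{1-t}$ for $t<1$, independence gives $\phi_{Y_{r}}(t)=(1-t)^{-r}$, so that $Y_{r}\sim\Gamma(r,1)$; in particular the moment expansion $\phi_{Y_{r}}(t)=\sum_{k\geq0}E(Y_{r}^{k})\frac{t^{k}}{k!}$ is valid near the origin, and hence, as a power series in $z$,
\[
\phi_{Y_{r}}(xz)=\sum_{k\geq0}E(Y_{r}^{k})\,x^{k}\frac{z^{k}}{k!}.
\]

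For the first identity I would multiply this series by $e^{z}=\sum_{n\geq0}\frac{z^{n}}{n!}$ and read off the coefficient of $\frac{z^{n}}{n!}$ via the binomial (Cauchy) convolution of exponential generating functions. With $a_{n}=1$ and $b_{k}=E(Y_{r}^{k})x^{k}$, this coefficient is $\sum_{k=0}^{n}\binom{n}{k}a_{n-k}b_{k}=\sum_{k=0}^{n}\binom{n}{k}x^{k}E(Y_{r}^{k})$. Equating it with $\mathbb{D}_{n}^{(r)}(x)$, as supplied by the preceding corollary, gives the first formula. As a consistency check, the moments of a $\Gamma(r,1)$ variable are $E(Y_{r}^{k})=\Gamma(r+k)/\Gamma(r)=r^{\overline{k}}$, which recovers the explicit formula $\mathbb{D}_{n}^{(r)}(x)=\sum_{k=0}^{n}\binom{n}{k}r^{\overline{k}}x^{k}$ established earlier.

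For the second identity the quickest route is the reciprocal relation (\ref{dn}), $d_{n}^{(r)}(x)=x^{n}\mathbb{D}_{n}^{(r)}(1/x)$. Substituting the first formula and simplifying,
\[
d_{n}^{(r)}(x)=x^{n}\sum_{k=0}^{n}\binom{n}{k}x^{-k}E(Y_{r}^{k})=\sum_{k=0}^{n}\binom{n}{k}x^{n-k}E(Y_{r}^{k}).
\]
Alternatively one may repeat the convolution argument directly on $\frac{e^{xz}}{(1-z)^{r}}=e^{xz}\phi_{Y_{r}}(z)$, now pairing the coefficients $x^{n-k}$ coming from $e^{xz}$ with the moments $E(Y_{r}^{k})$ coming from $\phi_{Y_{r}}(z)$.

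This argument is essentially routine, and I do not expect any genuine obstacle beyond careful bookkeeping. The only points that merit a word of care are the justification that the moment generating function admits the stated Taylor expansion (a standard property of gamma variables, whose moment generating function is analytic at the origin) and the correct matching of indices in the binomial convolution, so that the factor $x^{k}$ attaches to $E(Y_{r}^{k})$ in the first formula and the factor $x^{n-k}$ does so in the second.
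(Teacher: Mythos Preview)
Your argument is correct and is precisely the derivation the paper has in mind: the statement appears in the paper as an unproved corollary of the identity $e^{z}\phi_{Y_{r}}(xz)=\sum_{n\geq0}\mathbb{D}_{n}^{(r)}(x)\,z^{n}/n!$, and your expansion of $\phi_{Y_{r}}(xz)$ in moments followed by a Cauchy product, together with the reciprocal relation $d_{n}^{(r)}(x)=x^{n}\mathbb{D}_{n}^{(r)}(1/x)$, is exactly the intended route. The only cosmetic point is that the use of $x\mapsto 1/x$ assumes $x\neq0$, but since both sides of the second identity are polynomials in $x$ the formula extends to all $x$ by continuity.
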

\begin{corollary}
	Let $X_{1},X_{2},\ldots,X_{r}$ be independent and identically distributed	random variables with $X\sim\Gamma(1,1)$, then
	\begin{align*}
		E\left(	X_{1}+X_{2}+\cdots+X_{r}\right)^{k}   &  =r^{\overline{k}}.
	\end{align*}	
\end{corollary}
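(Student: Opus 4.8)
The plan is to give two short routes and follow the one that best fits the machinery already developed. The cleanest route is to compare the two expressions the paper has already produced for $\mathbb{D}_n^{(r)}(x)$. First I would recall the explicit formula from the earlier theorem,
\[
\mathbb{D}_{n}^{(r)}(x)=\sum_{k=0}^{n}\binom{n}{k}r^{\overline{k}}x^{k},
\]
and set it against the probabilistic representation from the preceding corollary,
\[
\mathbb{D}_{n}^{(r)}(x)=\sum_{k=0}^{n}\binom{n}{k}x^{k}E(Y_{r}^{k}),
\]
where $Y_{r}=X_{1}+\cdots+X_{r}$. Both right-hand sides are polynomials in $x$ that agree for every $x$ (and every $n$), so I would equate the coefficients of $x^{k}$, which gives $\binom{n}{k}E(Y_{r}^{k})=\binom{n}{k}r^{\overline{k}}$; taking any $n\geq k$ (e.g.\ $n=k$) and cancelling the nonzero binomial coefficient yields $E(Y_{r}^{k})=r^{\overline{k}}$.

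Since this deduction leans on an earlier corollary whose proof itself rests on the moment generating function, I would also record the direct, self-contained computation so the statement stands on its own. The key input is that for $X\sim\Gamma(1,1)$ the density is $e^{-x}$ on $[0,\infty)$, so its moment generating function is $\phi_{X}(t)=\int_{0}^{\infty}e^{tx}e^{-x}\,dx=(1-t)^{-1}$ for $t<1$. By independence, $\phi_{Y_{r}}(t)=\prod_{l=1}^{r}\phi_{X_{l}}(t)=(1-t)^{-r}$ on the same range. I would then expand this by the generalized binomial theorem,
\[
(1-t)^{-r}=\sum_{k\geq0}\binom{-r}{k}(-t)^{k}=\sum_{k\geq0}r^{\overline{k}}\frac{t^{k}}{k!},
\]
and match it against the moment expansion $\phi_{Y_{r}}(t)=\sum_{k\geq0}E(Y_{r}^{k})\,t^{k}/k!$, which is valid because the moment generating function is finite in a neighbourhood of $0$. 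Equating the coefficients of $t^{k}/k!$ again gives $E(Y_{r}^{k})=r^{\overline{k}}$.

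The computation is entirely routine; there is no genuine obstacle, only two small points to state carefully. The first is the identity $\binom{-r}{k}(-1)^{k}=r^{\overline{k}}/k!$, i.e.\ that the generalized binomial coefficient at a negative argument reproduces the rising factorial --- this is where the factor $r^{\overline{k}}$ actually enters. The second is the coefficient extraction itself: for the first route it is merely equality of polynomials, while for the second route I must note that $\phi_{Y_{r}}$ is analytic near $0$, which legitimizes reading the moments off the Taylor coefficients. I would present the moment generating function computation as the main argument (it is independent of the earlier corollaries and makes the appearance of $r^{\overline{k}}$ transparent) and mention the comparison of the two formulas for $\mathbb{D}_{n}^{(r)}(x)$ as a one-line consistency check.
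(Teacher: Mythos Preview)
Your proposal is correct. The paper states this result as a corollary with no proof displayed; the intended one-line justification is precisely your first route, namely comparing the explicit formula $\mathbb{D}_{n}^{(r)}(x)=\sum_{k}\binom{n}{k}r^{\overline{k}}x^{k}$ with the probabilistic representation $\mathbb{D}_{n}^{(r)}(x)=\sum_{k}\binom{n}{k}x^{k}E(Y_{r}^{k})$ from the immediately preceding corollary and reading off the coefficients.

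Your second argument via the moment generating function $\phi_{Y_r}(t)=(1-t)^{-r}=\sum_{k\ge 0} r^{\overline{k}}\,t^{k}/k!$ is a genuinely different, self-contained route: it bypasses the polynomials $\mathbb{D}_n^{(r)}$ entirely and derives the moments directly from the Gamma density. This buys independence from the earlier corollaries (whose proofs are themselves only sketched) and makes transparent where $r^{\overline{k}}$ comes from, at the cost of repeating the binomial-series identity $\binom{-r}{k}(-1)^{k}=r^{\overline{k}}/k!$. Either route is entirely adequate here; presenting the MGF computation as primary and the coefficient comparison as a consistency check, as you plan, is a reasonable ordering.
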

\section{On the Hankel determinant of $r-$derangement polynomials}
The Hankel matrix of order $n+1$ of a sequence $a_{0},a_{1},\ldots,$ is the $n+1$ by $n+1$ matrix whose $(i,j)$ entry is $a_{i+j},$ where the indices range between $0$ and $n$. The Hankel determinant of order $n+1$ is the determinant of the corresponding Hankel matrix, that is,
\[
\det(a_{i+j})_{0\leq i,j\leq n}=\det\left(
\begin{array}
	[c]{cccc}
	a_{0} & a_{1} & \cdots & a_{n}\\
	a_{1} & a_{2} & \cdots & a_{n+1}\\
	\vdots & \vdots & \ddots & \vdots\\
	a_{n} & a_{n+1} & \cdots & a_{2n}
\end{array}
\right).
\]

In this section,  employing a methodology akin to that in \cite{Rad}, we construct a comprehensive formulation for the Hankel determinant of the generalized derangement polynomials in terms of order $r$. Subsequently, we derive the Hankel determinant for the derangement polynomials of order $r$, as well as for the count of cyclic derangements.
\begin{lemma}\label{L11}
	\label{deriv n D}The following formula holds true
	\[
	\mathbb{D}_{n}^{(r)}\left(  \frac{1}{1-z}\right)  =\left(  1-z\right)
	^{r}e^{-z}\frac{d^{n}}{dz^{n}}\left(  \frac{e^{z}}{\left(  1-z\right)^{r}
	}\right).
	\]	
\end{lemma}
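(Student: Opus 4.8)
The plan is to verify the identity by comparing both sides as coefficients in an auxiliary exponential generating function in a fresh variable $t$, treating $z$ as a fixed parameter (say $|z|<1$, or formally as a parameter in the field of rational functions in $z$). Concretely, I would multiply each side by $t^{n}/n!$, sum over $n\geq0$, and show that the two resulting generating functions in $t$ collapse to one and the same closed form; equating coefficients of $t^{n}/n!$ then gives the lemma for every $n$.

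For the left-hand side I would substitute $x=\frac{1}{1-z}$ directly into the defining generating function (\ref{D1}), obtaining
\[
\sum_{n\geq0}\mathbb{D}_{n}^{(r)}\!\left(\tfrac{1}{1-z}\right)\frac{t^{n}}{n!}
=\frac{e^{t}}{\left(1-\frac{t}{1-z}\right)^{r}}
=\frac{(1-z)^{r}e^{t}}{(1-z-t)^{r}},
\]
where the last step is the elementary simplification $1-\frac{t}{1-z}=\frac{1-z-t}{1-z}$.

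For the right-hand side I would write $g(z)=\frac{e^{z}}{(1-z)^{r}}$, so that the claimed expression is $(1-z)^{r}e^{-z}g^{(n)}(z)$. Since the prefactor $(1-z)^{r}e^{-z}$ does not depend on $n$, I would pull it out of the sum and apply Taylor's theorem $\sum_{n\geq0}g^{(n)}(z)\frac{t^{n}}{n!}=g(z+t)$, which yields
\[
(1-z)^{r}e^{-z}\sum_{n\geq0}g^{(n)}(z)\frac{t^{n}}{n!}
=(1-z)^{r}e^{-z}g(z+t)
=(1-z)^{r}e^{-z}\frac{e^{z+t}}{(1-z-t)^{r}}
=\frac{(1-z)^{r}e^{t}}{(1-z-t)^{r}}.
\]
The two generating functions agree, so comparing the coefficients of $t^{n}/n!$ establishes the lemma.

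I expect the only genuine step to be the recognition that the $n$-th derivative appearing on the right is precisely what the Taylor expansion of $g$ about $z$ produces, so that summing against $t^{n}/n!$ telescopes the entire family of derivatives into the single shifted function $g(z+t)$; everything after that is just the two routine simplifications above. As a cross-check (and a self-contained alternative) I note that induction on $n$ also works: differentiating the equivalent form $g^{(n)}(z)=\frac{e^{z}}{(1-z)^{r}}\mathbb{D}_{n}^{(r)}\!\left(\frac{1}{1-z}\right)$ once and invoking the recurrence (\ref{T3}) reduces the inductive step to the contiguous relation $\mathbb{D}_{n}^{(r+1)}(x)=\mathbb{D}_{n}^{(r)}(x)+\frac{x}{r}\,\frac{d}{dx}\mathbb{D}_{n}^{(r)}(x)$, which is immediate from the explicit formula because $r^{\overline{k}}\,\frac{r+k}{r}=(r+1)^{\overline{k}}$. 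The generating-function route, however, is shorter and avoids this detour.
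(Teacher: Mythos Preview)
Your proof is correct. The generating-function manipulation is clean: substituting $x=\tfrac{1}{1-z}$ into (\ref{D1}) (with $t$ as the running variable) and invoking Taylor's theorem $\sum_{n\ge0}g^{(n)}(z)t^{n}/n!=g(z+t)$ both give $\dfrac{(1-z)^{r}e^{t}}{(1-z-t)^{r}}$, so equating coefficients yields the lemma.

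This is, however, a genuinely different route from the paper's. The authors compute $\dfrac{d^{n}}{dz^{n}}\!\left(\dfrac{e^{z}}{(1-z)^{r}}\right)$ directly via Leibniz's rule, using $\dfrac{d^{k}}{dz^{k}}(1-z)^{-r}=r^{\overline{k}}(1-z)^{-r-k}$, and then recognize the resulting sum $\sum_{k=0}^{n}\binom{n}{k}r^{\overline{k}}(1-z)^{-k}$ as $\mathbb{D}_{n}^{(r)}\!\left(\tfrac{1}{1-z}\right)$ by the explicit formula in Theorem~2.2. Their argument is thus a one-line termwise calculation that relies on having the closed form for $\mathbb{D}_{n}^{(r)}(x)$ already in hand. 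Your argument is slightly more structural: it needs only the defining generating function (\ref{D1}) and Taylor's theorem, never the explicit formula, and it makes transparent why the factor $(1-z)^{r}e^{-z}$ appears (it is exactly $1/g(z)$, cancelling the base point of the Taylor expansion). The inductive alternative you sketch is also valid but, as you note, the Taylor route is the shortest and requires no side identities.
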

\begin{proof}
	According to Leibniz's formula, we have the $n$-th derivative of the generating function of $\mathbb{D}_{n}^{(r)}\left(  1\right)  $
	\begin{align*}
		\frac{d^{n}}{dz^{n}}\left(  \frac{e^{z}}{\left(  1-z\right)  ^{r}}\right)   &
		=\sum_{k=0}^{n}\binom{n}{k}\frac{d^{n-k}}{dz^{n-k}}\left(  e^{z}\right)
		\frac{d^{k}}{dz^{k}}\left(  \frac{1}{\left(  1-z\right)  ^{r}}\right) \\
		&  =e^{z}\sum_{k=0}^{n}\binom{n}{k} r^{\overline{k}}\frac
		{1}{\left(  1-z\right)  ^{r+k}}\\
		&  =\frac{e^{z}}{\left(  1-z\right)  ^{r}}\sum_{k=0}^{n}\binom{n}{k}
		r^{\overline{k}}\frac{1}{\left(  1-z\right)  ^{k}}\\
		&  =\frac{e^{z}}{\left(  1-z\right)  ^{r}}\mathbb{D}_{n}^{(r)}\left(  \frac
		{1}{1-z}\right).
	\end{align*}
\end{proof}
\begin{theorem}
	For $n\geq1$ and $ z \neq 1$, we have
	\begin{align}
		\label{T12}
		\det\left(  \frac{\partial^{i+j-2}}{\partial z^{i+j-2}}\left(  \frac{e^{z}
		}{\left(  1-z\right)  ^{r}}\right)  \right)  _{1\leq i,j\leq n}=\left(
		\frac{e^{z}}{\left(  1-z\right)  ^{r}}\right)  ^{n}\frac{r^{\overline{n-1}}
			{\displaystyle\prod\limits_{k=1}^{n-1}}
			r^{\overline{k-1}}k!}{\left(  z-1\right)  ^{\left(
				n-1\right)  n}}.
	\end{align}
\end{theorem}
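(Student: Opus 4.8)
The plan is to use Lemma~\ref{L11} to strip the common analytic factor off every entry, turning the determinant of derivatives into a determinant of the polynomials $\mathbb{D}_m^{(r)}$, and then to evaluate the resulting purely algebraic Hankel determinant. Write $f(z)=e^{z}/(1-z)^{r}$ and $u=1/(1-z)$. Rearranging Lemma~\ref{L11} gives $\frac{d^{m}}{dz^{m}}f(z)=f(z)\,\mathbb{D}_{m}^{(r)}(u)$, so the $(i,j)$ entry of the Hankel matrix is $f(z)\,\mathbb{D}_{i+j-2}^{(r)}(u)$. Since $f(z)$ is a common factor of each of the $n$ rows, I would pull it out to get
\[
\det\Big(\tfrac{\partial^{i+j-2}}{\partial z^{i+j-2}}f\Big)_{1\le i,j\le n}=f(z)^{n}\,\det\big(\mathbb{D}_{i+j-2}^{(r)}(u)\big)_{1\le i,j\le n},
\]
which reduces the theorem to evaluating $H_{n}(u):=\det(\mathbb{D}_{m+l}^{(r)}(u))_{0\le m,l\le n-1}$ (after the substitution $m=i-1,\ l=j-1$).

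Next I would exploit the explicit formula $\mathbb{D}_{m}^{(r)}(u)=\sum_{k=0}^{m}\binom{m}{k}r^{\overline{k}}u^{k}$, which presents the sequence $(\mathbb{D}_{m}^{(r)}(u))_{m}$ as the binomial transform of $q_{k}:=r^{\overline{k}}u^{k}$. Hankel determinants are invariant under the binomial transform: letting $P=(\binom{m}{k})_{0\le m,k\le n-1}$ be the (unitriangular) Pascal matrix, Vandermonde's convolution identity $\sum_{k+k'=s}\binom{m}{k}\binom{l}{k'}=\binom{m+l}{s}$ shows that $(\mathbb{D}_{m+l}^{(r)}(u))=P\,(q_{m+l})\,P^{T}$, whence $H_{n}(u)=\det(q_{m+l})_{0\le m,l\le n-1}$ because $\det P=1$. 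Factoring $u^{m}$ out of row $m$ and $u^{l}$ out of column $l$ then produces $H_{n}(u)=u^{n(n-1)}\,K_{n}$, where $K_{n}:=\det(r^{\overline{m+l}})_{0\le m,l\le n-1}$.

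The heart of the matter is the Hankel determinant of rising factorials $K_{n}$. Here I would use the splitting $r^{\overline{m+l}}=r^{\overline{m}}\,(r+m)^{\overline{l}}$ to factor $r^{\overline{m}}$ out of row $m$, reducing $K_{n}$ to $\big(\prod_{m=0}^{n-1}r^{\overline{m}}\big)\det\big((r+m)^{\overline{l}}\big)_{0\le m,l\le n-1}$. Since $x^{\overline{l}}$ is monic of degree $l$ in $x=r+m$, column operations replace each $(r+m)^{\overline{l}}$ by $(r+m)^{l}$ without changing the determinant, leaving a Vandermonde determinant in the nodes $r,r+1,\dots,r+n-1$, equal to $\prod_{0\le m<m'\le n-1}(m'-m)=\prod_{k=0}^{n-1}k!$. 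Thus $K_{n}=\prod_{k=0}^{n-1}r^{\overline{k}}\,k!$. Reassembling and using $u^{n(n-1)}=(z-1)^{-n(n-1)}$ (the exponent is even) gives the claimed expression, once one checks the bookkeeping identity $\prod_{k=0}^{n-1}r^{\overline{k}}\,k!=r^{\overline{n-1}}\prod_{k=1}^{n-1}r^{\overline{k-1}}\,k!$.

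I expect the main obstacle to be the evaluation of $K_{n}$: the reduction steps are formal, but extracting $K_{n}$ cleanly (and matching my product to the repackaged constant in the statement) is where the genuine work lies. The factorization $r^{\overline{m+l}}=r^{\overline{m}}(r+m)^{\overline{l}}$ followed by the monic Vandermonde reduction is what makes this tractable; alternatively, one could observe that $r^{\overline{k}}=E(Y_{r}^{k})$ are the moments of $Y_{r}\sim\Gamma(r,1)$ (the Corollary above) and read $K_{n}$ off the norms of the associated Laguerre orthogonal polynomials.
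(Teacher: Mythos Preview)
Your argument is correct, but it takes a genuinely different route from the paper. The paper proceeds by induction on $n$ using Sylvester's determinant identity
\[
\Delta_{m+1}(G)=\frac{\Delta_{2}\bigl(\Delta_{m}(G)\bigr)}{\Delta_{m-1}(G)},
\]
verifying $n=1,2$ by hand and then computing the first and second $z$-derivatives of the closed form for $\Delta_{m}(G)$ to push the induction through; Lemma~\ref{L11} is used only afterwards, in Lemma~\ref{L13} and Theorem~\ref{T14}, to deduce the Hankel determinant of $\mathbb{D}^{(r)}_{m}(z)$ from the present theorem. You invert this logical flow: you invoke Lemma~\ref{L11} immediately to strip off $f(z)^{n}$, then evaluate the algebraic Hankel determinant $H_{n}(u)$ directly via the Pascal-matrix factorization $P\,(q_{k+k'})\,P^{T}$ (binomial-transform invariance) and the splitting $r^{\overline{m+l}}=r^{\overline{m}}(r+m)^{\overline{l}}$ followed by a Vandermonde reduction. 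Your approach is more structural and essentially proves Theorem~\ref{T14} first, giving the present theorem as a corollary; it also explains \emph{why} the constant has the shape $\prod r^{\overline{k}}\,k!$ (rows contribute the rising factorials, the Vandermonde contributes the factorials). The paper's Sylvester induction, by contrast, is more self-contained---it needs neither the binomial-transform trick nor the Vandermonde evaluation---at the cost of heavier derivative computations and less insight into the final product form. Your bookkeeping identity $\prod_{k=0}^{n-1}r^{\overline{k}}\,k!=r^{\overline{n-1}}\prod_{k=1}^{n-1}r^{\overline{k-1}}\,k!$ is easily checked by separating the $r^{\overline{\cdot}}$ and $k!$ factors, so the match with the stated formula is fine.
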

\begin{proof}
	By induction, using Sylvestre's formula, for $x=1$, we get the numbers
	$\mathbb{D}_{n}^{(r)}\left(  1\right)  =\mathbb{D}_{n}^{(r)}$, according to lemma \ref{L11}, the generating function of $\mathbb{D}_{n}^{(r)}$ is
	\[
	G\left(  z\right)  =\sum_{n\geq0}\mathbb{D}_{n}^{(r)}\frac{z^{n}}{n!}
	=\frac{e^{z}}{\left(  1-z\right)  ^{r}}.
	\]
	We denote that the Hankel determinant  $$\left(\det\left(  \frac{\partial^{i+j-2}}{\partial z^{i+j-2}}G\left(  z\right)  \right)_{i,j=1,2,\ldots,n}\right)$$ of the $i+j-2$-th derivative of the generating function $G\left(  z\right) $ by $ \Delta_{n}\left(  G\left(  z\right)  \right)$.\\
	For $n=1$, we have
	\[
	\Delta_{1}\left(  G\left(  z\right)  \right)  =\frac{e^{z}}{\left(  1-z\right)^{r}}.
	\]
	For $n=2$, we have
	\begin{align*}
		\Delta_{2}\left(  G\left(  z\right)  \right)   &  =\left\vert
		\begin{array}
			[c]{cc}
			G\left(  z\right)  & \frac{\partial}{\partial z}G\left(  z\right) \\
			\frac{\partial}{\partial z}G\left(  z\right)  & \frac{\partial^{2}}{\partial
				z^{2}}G\left(  z\right)
		\end{array}
		\right\vert \\
		&  =\left\vert
		\begin{array}
			[c]{cc}
			\frac{e^{z}}{\left(  1-z\right)  ^{r}} & \frac{e^{z}}{\left(  1-z\right)
				^{r}}\left[  1+\frac{r}{\left(  1-z\right)  }\right] \\
			\frac{e^{z}}{\left(  1-z\right)  ^{r}}\left[  1+\frac{r}{\left(  1-z\right)
			}\right]  & \frac{e^{z}}{\left(  1-z\right)  ^{r}}\left[  1+\frac{2r}{\left(
				1-z\right)  }+\frac{r\left(  r+1\right)  }{\left(  1-z\right)  ^{2}}\right]
		\end{array}
		\right\vert \\
		&  =\left(  \frac{e^{z}}{\left(  1-z\right)  ^{r}}\right)  ^{2}\frac
		{r}{\left(  z-1\right)  ^{2}}.
	\end{align*}
	Suppose that, the formula (\ref{T12}) is true for $n=1,2,\ldots,m$ and we calculate $\Delta_{m+1}\left(G\left(  z\right)  \right)$ by Sylvester's formula
	\[
	\Delta_{m+1}\left(  G\left(  z\right)  \right)  =\frac{\Delta_{2}\left(  \Delta_{m}\left(
		G\left(  z\right)  \right)  \right)  }{\Delta_{m-1}\left(  G\left(  z\right)
		\right)  }.
	\]
	By induction hypothesis, we have
	\begin{align*}
		\Delta_{m}\left(  G\left(  z\right)  \right)   &  =\left(  \frac{e^{z}}{\left(
			1-z\right)  ^{r}}\right)  ^{m}\frac{r^{\overline{m-1}}
			{\displaystyle\prod\limits_{k=1}^{m-1}}
			r^{\overline{k-1}}k!}{\left(  z-1\right)  ^{\left(
				m-1\right)  m}}\\
		&  =\left(  r^{\overline{m-1}}
		{\displaystyle\prod\limits_{k=1}^{m-1}}
		r^{\overline{k-1}}k!\right)  \frac{e^{mz}}{\left(
			1-z\right)  ^{mr+\left(  m-1\right)  m}}.
	\end{align*}
	To calculate $\Delta_{2}\left(  \Delta_{m}\left(
	G\left(  z\right)  \right)  \right)$, we need to calculate the $1$st and the $2$nd derivative of $\Delta_{m}\left(
	G\left(  z\right)  \right)$, for that we have the $1$st derivative
	\begin{multline*}
		\frac{\partial}{\partial z}\left(  \Delta_{m}\left(  G\left(  z\right)
		\right)  \right)  =\left(  r^{\overline{m-1}}{\displaystyle\prod
			\limits_{k=1}^{m-1}}r^{\overline{k-1}}k!\right)  e^{mz}\\
		\left(  \frac{m\left(  1-z\right)  ^{mr+\left(  m-1\right)  m}+\left(
			mr+m\left(  m-1\right)  \right)  \left(  1-z\right)  ^{mr+m\left(  m-1\right)
				-1}}{\left(  1-z\right)  ^{2(mr+m\left(  m-1\right)  )}}\right)  \\
		=\left(  r^{\overline{m-1}}{\displaystyle\prod\limits_{k=1}^{m-1}}%
		r^{\overline{k-1}}k!\right)  e^{mz}\times\alpha_{m}\left(  z\right)  ,
	\end{multline*}
	with
	\begin{align*}
		\alpha_{m}\left(  z\right)   &  =\frac{m}{\left(  1-z\right)  ^{m\left(
				r+m-1\right)  }}+\frac{m\left(  r+m-1\right)  }{\left(  1-z\right)  ^{m\left(
				r+m-1\right)  +1}}.
	\end{align*}
	The 2nd derivative of $ \Delta_{m}\left(G\left(  z\right)  \right) $ is
	\begin{align*}
		\frac{\partial^{2}}{\partial
			z^{2}}\left(  \Delta_{m}\left(  G\left(  z\right)  \right)  \right)
		&  =\left( r^{\overline{m-1}}
		{\displaystyle\prod\limits_{k=1}^{m-1}}
		r^{\overline{k-1}}k!\right)  e^{mz}\left[  m\times\alpha
		_{m}\left(  z\right)  +\frac{\partial}{\partial
			z}\alpha_{m}\left(  z\right)  \right] \\
		&   =\left(  r^{\overline{m-1}}
		{\displaystyle\prod\limits_{k=1}^{m-1}}
		r^{\overline{k-1}}k!\right)  e^{mz}\times\beta_{m}\left(
		z\right),
	\end{align*}
	with
	\begin{align*}
		\beta_{m}\left(  z\right)   &  =\frac{m^{2}}{\left(  1-z\right)  ^{m\left(r+m-1\right)  }}+\frac{2m^{2}\left(  r+m-1\right)  }{\left(  1-z\right)^{m\left(  r+m-1\right)  +1}}\\
		&  +\frac{m\left(  r+m-1\right)  \left(  m\left(  r+m-1\right)  +1\right)
		}{\left(  1-z\right)  ^{m\left(  r+m-1\right)  +2}}.
	\end{align*}
	Therefore
	\begin{multline*}
		\Delta_{2}\left(  \Delta_{m}\left(  G\left(  z\right)  \right)  \right)
		=\left(  r^{\overline{m-1}}{\displaystyle\prod\limits_{k=1}^{m-1}}%
		r^{\overline{k-1}}k!\right)  ^{2}e^{2mz}\\
		\times\left\vert
		\begin{array}
			[c]{c}%
			\frac{1}{\left(  1-z\right)  ^{m\left(  r+m-1\right)  }}\\
			\frac{m}{\left(  1-z\right)  ^{m\left(  r+m-1\right)  }}+\frac{m\left(
				r+m-1\right)  }{\left(  1-z\right)  ^{m\left(  r+m-1\right)  +1}}%
		\end{array}
		\right.  \\
		\left.
		\begin{array}
			[c]{c}%
			\frac{m}{\left(  1-z\right)  ^{m\left(  r+m-1\right)  }}+\frac{m\left(
				r+m-1\right)  }{\left(  1-z\right)  ^{m\left(  r+m-1\right)  +1}}\\
			\frac{m^{2}}{\left(  1-z\right)  ^{m\left(  r+m-1\right)  }}+\frac
			{2m^{2}\left(  r+m-1\right)  }{\left(  1-z\right)  ^{m\left(  r+m-1\right)
					+1}}+\frac{m\left(  r+m-1\right)  \left(  m\left(  r+m-1\right)  +1\right)
			}{\left(  1-z\right)  ^{m\left(  r+m-1\right)  +2}}%
		\end{array}
		\right\vert.
	\end{multline*}
	Thus,%
	\begin{align*}
		\Delta_{2}\left(  \Delta_{m}\left(  G\left(  z\right)  \right)  \right)   &
		=\left(  r^{\overline{m-1}}{\displaystyle\prod\limits_{k=1}^{m-1}}%
		r^{\overline{k-1}}k!\right)  ^{2}e^{2mz}\frac{1}{\left(  1-z\right)
			^{2m\left(  r+m-1\right)  }}\\
		&  {\small \times\left\vert
			\begin{array}
				[c]{cc}%
				1 & m+\frac{m\left(  r+m-1\right)  }{1-z}\\
				m+\frac{m\left(  r+m-1\right)  }{1-z} & m^{2}+\frac{2m^{2}\left(
					r+m-1\right)  }{1-z}+\frac{m\left(  r+m-1\right)  \left(  m\left(
					r+m-1\right)  +1\right)  }{\left(  1-z\right)  ^{2}}%
			\end{array}
			\right\vert}  \\
		&  =\left(  r^{\overline{m-1}}{\displaystyle\prod\limits_{k=1}^{m-1}%
		}r^{\overline{k-1}}k!\right)  ^{2}e^{2mz}\frac{m\left(  r+m-1\right)
		}{\left(  1-z\right)  ^{2m\left(  r+m-1\right)  +2}}.
	\end{align*}
	According to Sylvester's formula, we find
	\begin{align*}
		\Delta_{m+1}\left(  G\left(  z\right)  \right)   &  =\frac{\Delta_{2}\left(
			\Delta_{m}\left(  G\left(  z\right)  \right)  \right)  }{\Delta_{m-1}\left(
			G\left(  z\right)  \right)  }\\
		&  =\frac{\left(  r^{\overline{m-1}}{\displaystyle\prod\limits_{k=1}^{m-1}%
			}r^{\overline{k-1}}k!\right)  ^{2}e^{2mz}\frac{m\left(  r+m-1\right)
			}{\left(  1-z\right)  ^{2m\left(  r+m-1\right)  +2}}}{\left(  \left(
			r\right)  ^{\overline{m-2}}{\displaystyle\prod\limits_{k=1}^{m-2}}%
			r^{\overline{k-1}}k!\right)  \frac{e^{(m-1)z}}{\left(  1-z\right)  ^{\left(
					r+m-2\right)  \left(  m-1\right)  }}}.
	\end{align*}
	Thus,%
	\begin{align*}
		\Delta_{m+1}\left(  G\left(  z\right)  \right)   &  =\left(  r^{\overline{m-1}}{\displaystyle\prod\limits_{k=1}^{m-1}%
		}r^{\overline{k-1}}k!\right)  r^{\overline{m-1}}m!e^{mz+z}\frac{\left(
			r+m-1\right)  }{\left(  1-z\right)  ^{mr+m^{2}+r+2m}}\\
		&  =\left(  r^{\overline{m}}{\displaystyle\prod\limits_{k=1}^{m}}%
		r^{\overline{k-1}}k!\right)  e^{\left(  m+1\right)  z}\frac{1}{\left(
			1-z\right)  ^{\left(  m+1\right)  r+\left(  m+1\right)  m}}\\
		&  =\left(  \frac{e^{z}}{\left(  1-z\right)  ^{r}}\right)  ^{m+1}%
		\frac{r^{\overline{m}}{\displaystyle\prod\limits_{k=1}^{m}}r^{\overline{k-1}%
			}k!}{\left(  z-1\right)  ^{\left(  m+1\right)  m}}.
	\end{align*}
\end{proof}
\begin{lemma}\label{L13}
	For $n\geq1$ and $ z \neq 1$, we have
	\begin{multline*}
		\det\left(  \frac{e^{z}}{\left(  1-z\right)  ^{r}}\mathbb{D}_{i+j-2}%
		^{(r)}\left(  \frac{1}{1-z}\right)  \right)  _{1\leq i,j\leq n+1}=\\
		\left(  \frac{e^{z}}{\left(  1-z\right)  ^{r}}\right)  ^{n+1}\det\left(
		\mathbb{D}_{i+j-2}^{(r)}\left(  \frac{1}{1-z}\right)  \right)  _{1\leq i,j\leq
			n+1}.
	\end{multline*}
\end{lemma}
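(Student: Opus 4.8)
The plan is to recognize that this identity is nothing more than an instance of factoring a common scalar out of a determinant. Set
\[
c = \frac{e^{z}}{\left(1-z\right)^{r}}
\qquad\text{and}\qquad
a_{i+j-2} = \mathbb{D}_{i+j-2}^{(r)}\left(\frac{1}{1-z}\right),
\]
and observe the crucial point: for fixed $z \neq 1$ and fixed $r$, the quantity $c$ does not depend on the matrix indices $i,j$. Hence the matrix appearing on the left-hand side is, entry by entry, exactly $c$ times the Hankel-type matrix $M = \left(a_{i+j-2}\right)_{1\leq i,j\leq n+1}$ whose determinant appears on the right-hand side.

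First I would invoke the elementary multilinearity of the determinant in its rows. Factoring the common scalar $c$ out of each of the $n+1$ rows of $cM$ pulls out one factor of $c$ per row, so that
\[
\det(cM) = c^{\,n+1}\det(M).
\]
Substituting $M$ back in terms of the entries $\mathbb{D}_{i+j-2}^{(r)}\!\left(\frac{1}{1-z}\right)$ and recalling the definition of $c$ then yields precisely the claimed identity. One could equally well argue by expanding along a row, or by the Leibniz permutation-sum formula, where each of the $(n+1)!$ products picks up exactly $n+1$ factors of $c$; all three viewpoints give the same $c^{\,n+1}$.

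The main obstacle here is essentially notational rather than mathematical: the content of the lemma is the purely formal fact that scaling every entry of an $(n+1)\times(n+1)$ matrix by a common scalar multiplies the determinant by the $(n+1)$-st power of that scalar. The only substantive remark to record is that $\frac{e^{z}}{(1-z)^{r}}$ genuinely is index-independent and may be treated as a scalar, which is what the hypothesis $z\neq 1$ guarantees (so that the scalar is well defined and nonzero). This lemma is plainly meant as a bookkeeping step that will let the subsequent Hankel-determinant computation strip off the transcendental prefactor and reduce to the determinant of the polynomial values $\mathbb{D}_{i+j-2}^{(r)}\!\left(\frac{1}{1-z}\right)$, which can then be related to Theorem~\ref{T12} via Lemma~\ref{L11}.
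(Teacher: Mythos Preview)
Your argument is correct: the lemma is exactly the elementary fact that $\det(cM)=c^{\,n+1}\det(M)$ for an $(n+1)\times(n+1)$ matrix, applied with $c=\frac{e^{z}}{(1-z)^{r}}$. The paper in fact states this lemma without proof, so your explicit justification via multilinearity (or the Leibniz expansion) is more than what appears there; your reading of its role as a bookkeeping device linking Lemma~\ref{L11} and Theorem~\ref{T12} to the Hankel determinant of the $\mathbb{D}_{n}^{(r)}$ is also exactly how it is used in the proof of Theorem~\ref{T14}.
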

\begin{theorem}\label{T14}
	The Hankel determinant of order $n + 1$ of the generalized derangement polynomials $\mathbb{D}_{n}^{(r)}\left(  z\right)$ of order $r$, is given by
	\[
	\det\left(  \mathbb{D}_{i+j-2}^{(r)}\left(  z\right)  \right)  _{1\leq i,j\leq
		n+1}=z^{n\left(  n+1\right)  }r^{\overline{n}}
	{\displaystyle\prod\limits_{k=1}^{n}}
	r^{\overline{k-1}}k!.
	\]
\end{theorem}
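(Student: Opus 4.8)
The plan is to reduce the target Hankel determinant, whose entries are the polynomials $\mathbb{D}_{i+j-2}^{(r)}$ evaluated at a \emph{free} variable, to the determinant of derivatives of the generating function $\frac{e^{z}}{(1-z)^{r}}$ that has \emph{already} been evaluated in the preceding Theorem (formula \eqref{T12}). The bridge between the two is the substitution $z\mapsto\frac{1}{1-z}$ supplied by Lemma \ref{L11}, so the only genuinely new work is bookkeeping: combining the earlier results and then inverting the substitution.

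First I would feed Lemma \ref{L11} into the prefactor appearing in Lemma \ref{L13}. Multiplying the identity of Lemma \ref{L11} by $\frac{e^{z}}{(1-z)^{r}}$ collapses the two exponential-rational factors and gives, entrywise,
\[
\frac{e^{z}}{(1-z)^{r}}\,\mathbb{D}_{i+j-2}^{(r)}\!\left(\frac{1}{1-z}\right)=\frac{\partial^{i+j-2}}{\partial z^{i+j-2}}\!\left(\frac{e^{z}}{(1-z)^{r}}\right).
\]
Hence the matrix on the left-hand side of Lemma \ref{L13} is \emph{exactly} the derivative matrix of formula \eqref{T12}, now taken at size $n+1$. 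I would then read off both evaluations of this single determinant: on one side, Theorem \eqref{T12} (with $n$ replaced by $n+1$) yields $\left(\frac{e^{z}}{(1-z)^{r}}\right)^{n+1}\frac{r^{\overline{n}}\prod_{k=1}^{n}r^{\overline{k-1}}k!}{(z-1)^{n(n+1)}}$; on the other side, Lemma \ref{L13} pulls out the same scalar factor $\left(\frac{e^{z}}{(1-z)^{r}}\right)^{n+1}$ times $\det\left(\mathbb{D}_{i+j-2}^{(r)}\!\left(\frac{1}{1-z}\right)\right)_{1\le i,j\le n+1}$. Cancelling the common nonzero factor leaves
\[
\det\left(\mathbb{D}_{i+j-2}^{(r)}\!\left(\frac{1}{1-z}\right)\right)_{1\le i,j\le n+1}=\frac{r^{\overline{n}}\prod_{k=1}^{n}r^{\overline{k-1}}k!}{(z-1)^{n(n+1)}}.
\]

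Finally I would undo the change of variable by setting $w=\frac{1}{1-z}$, equivalently $z-1=-\tfrac{1}{w}$. The only subtlety is the sign: since $n(n+1)$ is always even, $(z-1)^{n(n+1)}=w^{-n(n+1)}$ with no stray $(-1)$, so the displayed right-hand side becomes $w^{n(n+1)}\,r^{\overline{n}}\prod_{k=1}^{n}r^{\overline{k-1}}k!$, and renaming $w$ back to $z$ gives the stated formula. I expect no real obstacle here: all the analytic weight sits in Theorem \eqref{T12}, whose inductive Sylvester computation is already carried out, and the present argument is purely the assembly of Lemmas \ref{L11} and \ref{L13} together with the parity remark on $n(n+1)$. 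If anything needs a word of care, it is checking that the exponential--rational prefactors cancel precisely so that the problem does reduce to the derivative determinant, and that the substitution is legitimate away from $z=1$ (already excluded in the hypotheses).
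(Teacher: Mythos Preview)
Your proposal is correct and follows essentially the same route as the paper: invoke Lemma~\ref{L11} (which is the entrywise content behind Lemma~\ref{L13}) to identify the scaled Hankel matrix with the derivative matrix, apply formula~\eqref{T12} at size $n+1$, cancel the common factor $\bigl(e^{z}/(1-z)^{r}\bigr)^{n+1}$, observe that $n(n+1)$ is even so the sign disappears, and finish with the substitution $T=\frac{1}{1-z}$. The only cosmetic difference is that you spell out the use of Lemma~\ref{L11} explicitly before invoking Lemma~\ref{L13}, whereas the paper cites Lemma~\ref{L13} directly.
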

\begin{proof}
	By Lemma \ref{L13}, we have
	\begin{align*}
		\left(  \frac{e^{z}}{\left(  1-z\right)  ^{r}}\right)  ^{n+1}\det\left(
		\mathbb{D}_{i+j-2}^{(r)}\left(  \frac{1}{1-z}\right)  \right)  &  =\det\left(  \frac{\partial^{i+j-2}}{\partial z^{i+j-2}}\left(
		\frac{e^{z}}{\left(  1-z\right)  ^{r}}\right)  \right)  \\
		&  =\left(  \frac{e^{z}}{\left(  1-z\right)  ^{r}}\right)  ^{n+1}\frac{
			r^{\overline{n}}
			{\displaystyle\prod\limits_{k=1}^{n}}
			r^{\overline{k-1}}k!}{\left(  z-1\right)  ^{\left(
				n+1\right)  n}},
	\end{align*}
	from where
	\begin{align*}
		\det\left(  \mathbb{D}_{i+j-2}^{(r)}\left(  \frac{1}{1-z}\right)  \right)  &  =\frac{r^{\overline{n}}
			{\displaystyle\prod\limits_{k=1}^{n}}
			r^{\overline{k-1}}k!}{\left(  z-1\right)  ^{\left(
				n+1\right)  n}}\\
		&  =\left(  -1\right)  ^{n(n+1)}\left(  \frac{1}{1-z}\right)  ^{^{\left(n+1\right)  n}}r^{\overline{n}}
		{\displaystyle\prod\limits_{k=1}^{n}}
		r^{\overline{k-1}}k!\\
		&  =\left(  \frac{1}{1-z}\right)  ^{^{\left(  n+1\right)  n}}r
		^{\overline{n}}
		{\displaystyle\prod\limits_{k=1}^{n}}
		r^{\overline{k-1}}k!.
	\end{align*}
	Now let's pose $T=\frac{1}{1-z}$, we obtain
	\[
	\det\left(  \mathbb{D}_{i+j-2}^{(r)}\left(  T\right)  \right)=T^{n\left(  n+1\right)}r^{\overline{n}}
	{\displaystyle\prod\limits_{k=1}^{n}}
	r^{\overline{k-1}}k!.
	\]	
\end{proof}
\begin{remark}
	When $r=1$ and $x=-1$, the well-known Hankel determinants of order $n+1$ of the matrices
	$[(i+j)!]$ and $[D_{i+j}]$, is given by
	\[
	det((i+j)!)_{0\leq i,j\leq n}=det(D_{i+j})_{0\leq i,j\leq n}=\left(
	{\displaystyle\prod\limits_{k=1}^{n}}
	k!\right)  ^{2}.
	\]
\end{remark}
\begin{theorem}
	The Hankel determinant of order $n + 1$ of the derangement polynomials $d_{n}^{(r)}\left(  z\right)$ of order $r$, is given by
	\[
	\det\left(d_{i+j-2}^{(r)}\left(  z\right) \right)  _{1\leq i,j\leq n+1}=r^{\overline{n}}
	{\displaystyle\prod\limits_{k=1}^{n}}
	r^{\overline{k-1}}k!.
	\]
\end{theorem}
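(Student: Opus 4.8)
The plan is to reduce this statement to Theorem~\ref{T14} via the substitution relation~(\ref{dn}), which for $z\neq 0$ reads $d_{m}^{(r)}(z)=z^{m}\mathbb{D}_{m}^{(r)}(1/z)$. First I would rewrite the Hankel matrix entrywise: the $(i,j)$ entry $d_{i+j-2}^{(r)}(z)$ becomes $z^{i+j-2}\mathbb{D}_{i+j-2}^{(r)}(1/z)$. Since $z^{i+j-2}=z^{i-1}z^{j-1}$ splits into a product depending separately on the row index and the column index, the Hankel matrix factors as $DND$, where $D=\mathrm{diag}(z^{0},z^{1},\ldots,z^{n})$ and $N$ is the Hankel matrix whose $(i,j)$ entry is $\mathbb{D}_{i+j-2}^{(r)}(1/z)$.

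Taking determinants and using multiplicativity gives $\det\bigl(d_{i+j-2}^{(r)}(z)\bigr)=(\det D)^{2}\det N$. Here $\det D=\prod_{i=1}^{n+1}z^{i-1}=z^{n(n+1)/2}$, so $(\det D)^{2}=z^{n(n+1)}$. Next I would apply Theorem~\ref{T14} with argument $1/z$ in place of $z$, obtaining $\det N=(1/z)^{n(n+1)}\,r^{\overline{n}}\prod_{k=1}^{n}r^{\overline{k-1}}k!$. Multiplying the two contributions, the factors $z^{n(n+1)}$ and $z^{-n(n+1)}$ cancel exactly, leaving the claimed value $r^{\overline{n}}\prod_{k=1}^{n}r^{\overline{k-1}}k!$.

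The one point deserving care—and essentially the only subtlety—is that relation~(\ref{dn}) is valid only for $z\neq 0$, whereas the statement is phrased for all $z$. This is harmless: the left-hand side $\det\bigl(d_{i+j-2}^{(r)}(z)\bigr)$ is a polynomial in $z$, since each $d_{m}^{(r)}(z)$ is a polynomial, and the argument above shows it equals the constant $r^{\overline{n}}\prod_{k=1}^{n}r^{\overline{k-1}}k!$ for all $z\neq 0$; a polynomial that agrees with a constant at infinitely many points is identically that constant, so the identity extends to $z=0$ as well. I expect no genuine obstacle: the whole argument is a clean corollary of Theorem~\ref{T14}, and the striking feature that the determinant is \emph{independent of $z$} emerges automatically from the cancellation of the two powers of $z$ produced by the row-column scaling and by Theorem~\ref{T14}.
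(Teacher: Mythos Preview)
Your proof is correct and follows essentially the same route as the paper: both use the relation $d_{m}^{(r)}(z)=z^{m}\mathbb{D}_{m}^{(r)}(1/z)$, factor the powers $z^{i-1}z^{j-1}$ out of the rows and columns of the Hankel matrix, and then invoke Theorem~\ref{T14} at $1/z$ so that the two factors $z^{\pm n(n+1)}$ cancel. Your version is in fact a bit tidier---the $DND$ factorization makes the row/column extraction explicit, and you address the case $z=0$ via a polynomial-identity argument, a point the paper's proof passes over.
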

\begin{proof} We have,
	\begin{align*}
		\det\left(d_{i+j-2}^{(r)}\left(  z\right) \right)  & =\left\vert
		\begin{array}
			[c]{cccc}%
			z^{0}\mathbb{D}_{0}^{(r)}\left(  \frac{1}{z}\right)   & z^{1}\mathbb{D}%
			_{1}^{(r)}\left(  \frac{1}{z}\right)   & \ldots & z^{n}\mathbb{D}_{n}%
			^{(r)}\left(  \frac{1}{z}\right)  \\
			z^{1}\mathbb{D}_{1}^{(r)}\left(  \frac{1}{z}\right)   & z^{2}\mathbb{D}%
			_{2}^{(r)}\left(  \frac{1}{z}\right)   & \ldots & z^{n+1}\mathbb{D}%
			_{n+1}^{(r)}\left(  \frac{1}{z}\right)  \\
			\ldotp & \ldotp & \ldotp & \ldotp\\
			z^{n}\mathbb{D}_{n}^{(r)}\left(  \frac{1}{z}\right)   & z^{n+1}\mathbb{D}%
			_{n+1}^{(r)}\left(  \frac{1}{z}\right)   & \ldots & z^{2n}\mathbb{D}%
			_{2n}^{(r)}\left(  \frac{1}{z}\right)
		\end{array}
		\right\vert \\
		& =\left(  z^{0}z^{1}z^{2}\cdots z^{n}\right)  ^{2}\left\vert
		\begin{array}
			[c]{cccc}%
			\mathbb{D}_{0}^{(r)}\left(  \frac{1}{z}\right)   & \mathbb{D}_{1}^{(r)}\left(
			\frac{1}{z}\right)   & \ldots & \mathbb{D}_{n}^{(r)}\left(  \frac{1}%
			{z}\right)  \\
			\mathbb{D}_{1}^{(r)}\left(  \frac{1}{z}\right)   & \mathbb{D}_{2}^{(r)}\left(
			\frac{1}{z}\right)   & \ldots & \mathbb{D}_{n+1}^{(r)}\left(  \frac{1}%
			{z}\right)  \\
			\ldotp & \ldotp & \ldotp & \ldotp\\
			\mathbb{D}_{n}^{(r)}\left(  \frac{1}{z}\right)   & \mathbb{D}_{n+1}%
			^{(r)}\left(  \frac{1}{z}\right)   & \ldots & \mathbb{D}_{2n}^{(r)}\left(
			\frac{1}{z}\right)
		\end{array}
		\right\vert \\
		& =\left(  z^{0}z^{1}z^{2}\cdots z^{n}\right)  ^{2}\left(  \frac{1}{z}\right)
		^{n\left(  n+1\right)  }r^{\overline{n}}{\displaystyle\prod\limits_{k=1}^{n}%
		}r^{\overline{k-1}}k!\\
		& =z^{n\left(  n+1\right)  -n\left(  n+1\right)  }r^{\overline{n}%
		}{\displaystyle\prod\limits_{k=1}^{n}}r^{\overline{k-1}}k!\\
		& =r^{\overline{n}}{\displaystyle\prod\limits_{k=1}^{n}}r^{\overline{k-1}}k!.
	\end{align*}
\end{proof}
\begin{corollary}
	The Hankel determinant of order $n + 1$ of the number of cyclic derangements $d_{n,r} $, is given by
	\[
	\det\left( d_{i+j-2,r} \right)  _{1\leq i,j\leq
		n+1}=r^{n\left(  n+1\right)}\left({\displaystyle\prod\limits_{k=1}^{n}}k!\right)  ^{2}.
	\]
\end{corollary}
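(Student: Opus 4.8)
The plan is to reduce the statement to the Hankel determinant of the derangement polynomials of order $r$ established above, specialised to order $1$. The substantive first step is to express the cyclic derangement numbers $d_{n,r}$ through the polynomials $d_{n}^{(1)}(x)$. Starting from the generating function (\ref{5}) taken at order $1$, I would replace $z$ by $rz$ and set $x=-\tfrac{1}{r}$, obtaining
\[
\frac{e^{-z}}{1-rz}=\sum_{n\geq0}r^{n}d_{n}^{(1)}\!\left(-\tfrac{1}{r}\right)\frac{z^{n}}{n!}.
\]
Comparing this with the defining generating function (\ref{6}) of the cyclic derangements then yields the key identity $d_{n,r}=r^{n}d_{n}^{(1)}(-\tfrac{1}{r})$ for every $n\geq0$.

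Next I would insert this identity into the Hankel matrix. Since its $(i,j)$ entry equals $r^{i+j-2}d_{i+j-2}^{(1)}(-\tfrac{1}{r})=r^{i-1}\,r^{j-1}\,d_{i+j-2}^{(1)}(-\tfrac{1}{r})$, the factor $r^{i-1}$ can be pulled out of the $i$-th row and $r^{j-1}$ out of the $j$-th column by multilinearity of the determinant. This produces the scalar
\[
\left(\prod_{i=1}^{n+1}r^{i-1}\right)^{2}=r^{2(0+1+\cdots+n)}=r^{n(n+1)},
\]
multiplied by $\det\!\left(d_{i+j-2}^{(1)}(-\tfrac{1}{r})\right)_{1\leq i,j\leq n+1}$.

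To evaluate the remaining determinant I would apply the Hankel determinant formula for the derangement polynomials of order $r$ with order parameter $1$ and argument $-\tfrac{1}{r}$; since that value is independent of the argument, the determinant equals $1^{\overline{n}}\prod_{k=1}^{n}1^{\overline{k-1}}k!$. The final step is the factorial bookkeeping: using $1^{\overline{n}}=n!$ and $1^{\overline{k-1}}=(k-1)!$, I would verify the telescoping identity
\[
n!\prod_{k=1}^{n}(k-1)!\,k!=\left(n!\prod_{k=1}^{n-1}k!\right)\prod_{k=1}^{n}k!=\left(\prod_{k=1}^{n}k!\right)^{2},
\]
where the middle equality uses $n!\prod_{k=1}^{n-1}k!=\prod_{k=1}^{n}k!$. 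Combining this with the factor $r^{n(n+1)}$ gives exactly the claimed value.

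The determinant rescaling and the factorial simplification are entirely routine, so the only genuinely delicate point is the generating-function manipulation in the first paragraph: one must choose the substitution $z\mapsto rz$ together with the specialisation $x=-\tfrac{1}{r}$ so that the resulting generating function matches (\ref{6}) on the nose. Once the identification $d_{n,r}=r^{n}d_{n}^{(1)}(-\tfrac{1}{r})$ is in hand, the corollary follows immediately from the earlier theorem and the bilinear rescaling of rows and columns.
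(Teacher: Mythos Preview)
Your proof is correct and follows essentially the same strategy as the paper: identify $d_{n,r}$ as a specialization of one of the derangement polynomial families and then invoke the Hankel determinant already established. The only cosmetic difference is that the paper uses the relation $d_{n,r}=(-1)^{n}\mathbb{D}_{n}^{(1)}(-r)$ together with Theorem~\ref{T14}, whereas you route through the equivalent relation $d_{n,r}=r^{n}d_{n}^{(1)}(-\tfrac{1}{r})$ and the Hankel determinant of $d_{n}^{(r)}(z)$; via (\ref{dn}) these two identifications are the same, and the row/column rescalings play the same role in both arguments.
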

\begin{proof}
	The relationship between the generalized derangement polynomials $\left(\mathbb{D}_{n}^{(r)}\left(  z\right)\right)$ of order $r$ and the number of cyclic derangements $d_{n,r}$ is given by
	\begin{equation}\label{d_{n,r}}
		(-1)^{n}\mathbb{D}_{n}^{(1)}\left(  -r\right)  =d_{n,r}.
	\end{equation}
	Now, using the Theorem \ref{T14} and the formula (\ref{d_{n,r}}), we get the desired result.
\end{proof}

\end{document}